\theoremstyle{plain}
\newtheorem{theorem}{Theorem}
\newtheorem{lemma}{Lemma}
\newtheorem{corollary}{Corollary}
\theoremstyle{remark}
\newtheorem*{remark}{Remark}
\newcommand{\rank}{\text{rank}}
\title{Statistical Results on Filtering and Epi-convergence for Learning-Based Model Predictive Control}
\author{Anil Aswani, Humberto Gonzalez, S. Shankar Sastry, and Claire Tomlin}
\begin{document}
\maketitle

\begin{abstract}
Learning-based model predictive control (LBMPC) is a technique that provides deterministic guarantees on robustness, while statistical identification tools are used to identify richer models of the system in order to improve performance.  This technical note provides proofs that elucidate the reasons for our choice of measurement model, as well as giving proofs concerning the stochastic convergence of LBMPC.  The first part of this note discusses simultaneous state estimation and statistical identification (or learning) of unmodeled dynamics, for dynamical systems that can be described by ordinary differential equations (ODE's).  The second part provides proofs concerning the epi-convergence of different statistical estimators that can be used with the learning-based model predictive control (LBMPC) technique.  In particular, we prove results on the statistical properties of a nonparametric estimator that we have designed to have the correct deterministic and stochastic properties for numerical implementation when used in conjunction with LBMPC. 
\end{abstract}

\section{Introduction}

This technical note is meant to be understood in the context of \cite{aswani2011v1}, and it consists of two distinct parts.  Sections \ref{section:limits} and \ref{section:filter} concern simultaneous state estimation and statistical identification (or learning) of unmodeled dynamics, for dynamical systems that can be described by ordinary differential equations (ODE's).  The second part is found in Section \ref{section:epi} and provides proofs concerning the epi-convergence of different statistical estimators that can be used with the learning-based model predictive control (LBMPC) technique.

For the results on estimation and learning, we assume that for state vector $x \in \mathbb{R}^p$, control input $u \in \mathbb{R}^m$, and output $y \in \mathbb{R}^q$, the system dynamics are given by the following ODE:
\begin{equation}
\label{eqn:model}
\begin{aligned}
\dot{x} &= A_c x + B_c u + g_c(x,u) \\
y &= Cx,
\end{aligned}
\end{equation}
where $A_c,B_c,C$ are matrices of appropriate size and $g_c(x,u)$ describes the unmodeled (possibly nonlinear) dynamics.  We will assume that the control inputs generated by the model predictive control (MPC) schemes are piecewise constant
\begin{equation}
\label{eqn:input}
u(t) = u_m, \qquad \forall t \in \big[mT_u,(m+1)T_u\big),
\end{equation}
where $T_u$ is the sampling period of the input.  Note that appropriately designed MPC can generate other control schemes, such as piecewise linear inputs.

%The first part of this technical notes motivates why we measure all states.  Suppose the unmodeled dynamics are of the form $\gamma(x,u;\lambda) + K$, where $K$ is a constant, non-zero vector and $\gamma(x,u;\lambda)$ is a parametrized nonlinear function such that $\gamma(x,u;\lambda_0) \equiv 0$ for some parameter value $\lambda_0$.  We show that a filter cannot learn these unmodeled dynamics while doing state-estimation, unless all states are measured.  This negative result is not true for all systems, and it is possible to do both for systems with specific unmodeled dynamics.
%
%The second part of the appendix discusses state filtering when all states are measured.  State filtering is needed to deal with the case in which the measurements have noise because such noise leads to an errors-in-variables situation.  This situation is problematic because it is known that standard estimates of unmodeled dynamics will be asymptotically incorrect \cite{schneeweiss1976}.  Control engineers have developed techniques, including the Kalman filter, to deal with this \cite{astrom1971,ljung1987}.
%
%The Kalman filter requires a parametric representation of the unmodeled dynamics, and this is not always available.  Instead, we use a local polynomial regression (LPR) filter which requires less prior knowledge.  The piecewise constant input in MPC is incompatible with existing regularity conditions for LPR filters on the smoothness of states and inputs.  To remedy this, we define a bi-level sampling scheme that provides enough smoothness.

\section{Limitations on Filtering and Learning}
\label{section:limits}
We begin with a negative result about the inability of filters to estimate both the state and unmodeled dynamics, for a general system in which all states are not measured.  (This result does not apply to systems with special structure, such as in \cite{aswani_quad_2011}.)  This limitation applies to situations in which unmodeled dynamics are described by a series expansion with constant terms, and so it is relevant to a wide class of systems and filtering approaches.  

Suppose the unmodeled dynamics are parameterized as $g_c(x,u) = \gamma(x,u;\theta) + K$, where $K$ is a constant, non-zero vector and $\gamma(x,u;\theta)$ is a parametrized function such that $\gamma(x,u;\theta_0) \equiv 0$ for some parameter value $\theta_0$.  We again note that this includes the situation in which $g_c$ is given by a series expansion (e.g., Taylor polynomial, Fourier series, etc.).

The intuition is that statistical identification (or learning) of the parameters $\theta,K$ and estimation of the state $x$ can be cast into the framework of observability of an augmented dynamical system.  The augmented system has $y = Cx$ and dynamics
\begin{equation}
\label{eqn:aug}
\begin{bmatrix}\dot{x}\\\dot{K}\\\dot{\theta}\end{bmatrix} = \begin{bmatrix}A_cx + B_cu + \gamma(x,u;\theta) + K\\0\\0\end{bmatrix}.
\end{equation}
When all states are not measured and there is no special structure on $K$, then this augmented system is not observable.  This means that $(x,K,\lambda)$ cannot be simultaneously estimated using measurements of the system output $y$.  This is formalized by the following theorem.

\begin{theorem}
\label{theorem:paw}
A necessary condition for the observability (and detectability) of the system given in (\ref{eqn:aug}) with $y = Cx$ is that $\rank(C) = p$.
\end{theorem}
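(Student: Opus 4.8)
The plan is to prove the contrapositive: assuming $\rank(C) < p$, I will exhibit two distinct initial conditions for the augmented system (\ref{eqn:aug}) that produce identical output trajectories under every input, which shows the system fails to be observable, and moreover I will arrange these states so that their difference does not decay, which shows it also fails to be detectable. The key simplification is to work entirely within the slice $\theta = \theta_0$, where by hypothesis $\gamma(x,u;\theta_0) \equiv 0$ while $\dot\theta = 0$ keeps $\theta$ pinned at $\theta_0$; on this invariant set the augmented dynamics collapse to the linear system $\dot{x} = A_c x + B_c u + K$, $\dot{K} = 0$, which I can analyze with linear tools.

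First I would record the associated linear pair on the $(x,K)$ coordinates,
\[
\tilde{A} = \begin{bmatrix} A_c & I \\ 0 & 0 \end{bmatrix}, \qquad \tilde{C} = \begin{bmatrix} C & 0 \end{bmatrix},
\]
and note that it suffices to produce an unobservable mode of $(\tilde{A},\tilde{C})$. Since $\rank(C) < p$, the kernel of $C$ is nontrivial, so I can choose $\xi \neq 0$ with $C\xi = 0$. Setting $w = \bigl[\,\xi^{\top}\;\; (-A_c\xi)^{\top}\,\bigr]^{\top}$, I would verify two facts by direct computation: $\tilde{A} w = 0$, so $w$ is an eigenvector of $\tilde{A}$ for the eigenvalue $0$; and $\tilde{C} w = C\xi = 0$, so $w \in \ker\tilde{C}$. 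By the Popov--Belevitch--Hautus test this already shows the pair $(\tilde{A},\tilde{C})$ is not observable.

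To make the indistinguishability fully explicit, and to avoid leaning on nonlinear observability machinery, I would translate $w$ back into two trajectories. Fix any input $u(\cdot)$ and any base trajectory $(x_2(\cdot),K_2,\theta_0)$, and choose a second initial condition $x_1(0) = x_2(0) + \xi$, $K_1 = K_2 - A_c\xi$, with $\theta = \theta_0$. The difference $\delta x = x_1 - x_2$ then obeys $\dot{\delta x} = A_c\,\delta x + \delta K$ with $\delta x(0) = \xi$ and $\delta K = -A_c\xi$, and one checks that $\delta x(t) \equiv \xi$ solves this, so $y_1(t) - y_2(t) = C\xi = 0$ for all $t$ and every $u$. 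Thus two states differing by the nonzero vector $(\xi,-A_c\xi,0)$ are indistinguishable, establishing non-observability of the full augmented system. Since $\delta x(t) \equiv \xi \neq 0$ does not tend to zero, equivalently the unobservable mode sits at the eigenvalue $0$, which is not in the open left half plane, the same construction shows the estimation error cannot be driven to zero, so detectability fails as well.

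The step I expect to require the most care is the reduction from the nonlinear augmented system to the linear slice: I must justify that observability (or detectability) of (\ref{eqn:aug}) genuinely forces the corresponding property for the restricted linear pair. This hinges on (i) $\theta_0$ being an invariant value at which $\gamma$ vanishes identically, so the selected trajectories remain governed by the linear dynamics, and (ii) the difference dynamics being independent of $u$, so that indistinguishability holds uniformly over all inputs rather than for a single one. Once these two points are in place, everything else reduces to the short eigenvector computation above.
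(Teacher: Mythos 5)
Your proposal is correct, and its core is the same reduction the paper uses: fix $\theta = \theta_0$ so that $\gamma \equiv 0$, pass to the LTI pair $\tilde{A} = \begin{bmatrix} A_c & I \\ 0 & 0\end{bmatrix}$, $\tilde{C} = \begin{bmatrix} C & 0\end{bmatrix}$, and observe that the PBH test fails at $s = 0$, which kills both observability and detectability. Where you differ is in \emph{how} the PBH failure is exhibited and what you do with it. The paper counts ranks: at $s=0$ the block structure of the PBH matrix $\phi$ gives $\rank(\phi) = p + \rank(C) < 2p$. You instead produce the explicit null vector $w = \bigl[\xi^{\top}\;\; (-A_c\xi)^{\top}\bigr]^{\top}$ with $C\xi = 0$, and then convert it into two genuinely indistinguishable trajectories of the full nonlinear augmented system, with a constant, non-decaying state difference $(\xi, -A_c\xi, 0)$. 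That last step buys something the paper leaves implicit: the paper's argument silently assumes that non-observability of the $\theta_0$-slice LTI system implies non-observability of the nonlinear augmented system (\ref{eqn:aug}), and that LTI detectability language transfers as well. Your trajectory construction makes both transfers airtight --- the slice $\theta = \theta_0$ is invariant, the difference dynamics are exactly linear and input-independent there, and the persistent nonzero error $\delta x \equiv \xi$ rules out any reasonable notion of detectability for the nonlinear system without appealing to LTI-specific machinery. The cost is a slightly longer argument; the paper's rank count is terser but rests on that unstated reduction.
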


\begin{proof}
Suppose $\theta = \theta_0$, which makes $\gamma(x,u;\theta) \equiv 0$.  Then the system is linear and time-invariant (LTI).  Using the Popov-Belevitch-Hautus (PBH) test, the system is observable if and only if $\rank(\phi) = p + p = 2p$, for all $s \in \mathbb{C} : \text{Re}(s) \geq 0$, where
\begin{equation}
\phi = \begin{bmatrix} s\mathbb{I} - A_c & -\mathbb{I} \\ 0 & s\mathbb{I} \\ C & 0 \end{bmatrix}.
\end{equation}
If $s = 0$, then the matrices $\phi$ and $s\mathbb{I}$ are both singular, and the block structure of $\phi$ implies that $\rank(\phi) = p + \rank(C)$.  The system is not observable (and not detectable) when $\rank(C) < p$, establishing necessity.
\end{proof}

\begin{remark}
This result also applies to discrete time systems, and the proof is nearly identical.
% might want to include C = I in an assumptions section and say that justification for this assumption is given in Sect III.A
\end{remark}

In light of this negative result concerning filters, we require that $C$ be full rank.  Without loss of generality, we assume that the full state $x$ is measured.

\section{Nonparametric Filtering for Dynamical Systems}
\label{section:filter}
The design of a Kalman filter for systems with unmodeled dynamics can be complex, and so we propose a nonparametric regression approach for estimating the state.  Available approaches include local polynomial regression (LPR) or spline-smoothing; the Savitzky-Golay filter \cite{savitzkygolay1964} is technically a finite impulse response (FIR) filter implementation of LPR.  We design a new nonparametric filter, and one advantage is of this filter is that it is easily computed because it is the weighted sums of measurements.  

An important point to note is that the statistical guarantees provided by our filter are not the same as for a Kalman filter.  The Kalman filter is defined to be consistent if its state estimates are unbiased and the true error covariance is smaller (covariance matrices are positive semi-definite, and so a partial order can be defined) than the estimated error covariance.  In our method, consistency is defined with respect to the sampling period $T_s$ of state measurements.  As $T_s \rightarrow 0$, the estimates converge to the real values in probability.  This philosophical change is necessary in order to use nonparametric statistics, otherwise we would be forced to use a parametric model of the unmodeled dynamics.

We begin with a lemma about the differentiability of the state trajectory $x(t)$ when the inputs are piecewise constant.
\begin{lemma}
Suppose $g_c(x,u)$ is $Q-1$-times differentiable.  For $m\in \mathbb{Z}$, the trajectory $x(t)$ which solves the ODE in (\ref{eqn:model}) is once-differentiable everywhere, $Q$-times differentiable at $t \neq mT_u$, and not twice-differentiable at $t = mT_u$.
\end{lemma}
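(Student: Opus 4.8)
The plan is to localize everything to two regimes: the open sampling intervals $(mT_u,(m+1)T_u)$, on which the input is a smooth function of $t$, and the transition instants $t=mT_u$, which are the only places where regularity can be lost. The organizing principle I would exploit is that $x$ is always exactly \emph{one order smoother than the input}: since $\dot x$ equals the right-hand side $F(t,x):=A_cx+B_cu(t)+g_c\big(x,u(t)\big)$, the map $t\mapsto\dot x(t)$ has the same regularity as $u$, and integrating once transfers that regularity to $x$ with one order to spare. For the input relevant here (continuous and piecewise affine in $t$, i.e.\ the piecewise-linear input the note also considers, whose first derivative is piecewise constant with jumps at the $mT_u$), this principle already predicts $x\in C^1$ globally with a corner in $\dot x$ at each $mT_u$, which is precisely the stated conclusion.

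First I would fix an interval $(mT_u,(m+1)T_u)$ and show $x\in C^Q$ there by bootstrapping the ODE. On this interval $u(t)$ is affine, so $F(t,x)$ is jointly $(Q-1)$-times continuously differentiable, inheriting its $x$-regularity from $g_c$ and its $t$-regularity from the affine $u$. From $\dot x=F(t,x)$ with $x\in C^1$, the chain rule gives $\ddot x=\partial_tF+\partial_xF\,\dot x$, which is $C^{Q-2}$; iterating, each further derivative of $x$ consumes one derivative of $F$, so after $Q$ steps I reach $x\in C^Q$ and can proceed no further because $g_c$ supplies only $Q-1$ derivatives. This gives the $Q$-times differentiability at $t\neq mT_u$.

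Next I would treat the transitions. Continuity of $x$ everywhere follows from the integral form $x(t)=x(0)+\int_0^t F\big(s,x(s)\big)\,ds$: the integrand is bounded and piecewise continuous, so $x$ is absolutely continuous and in particular continuous across each $mT_u$. Because $u$ is continuous at $mT_u$, the composition $t\mapsto F\big(t,x(t)\big)$ is continuous there, hence $\dot x$ is continuous and $x\in C^1$ everywhere, establishing once-differentiability. To locate the loss of the second derivative I would differentiate once more, $\ddot x=\partial_tF+\partial_xF\,\dot x$, and observe that the $\partial_tF$ term contains $\dot u$ through $B_c\dot u+(\partial_ug_c)\,\dot u$; since $\dot u$ jumps at $mT_u$ while every other factor is continuous there, $\ddot x$ inherits a jump, so $x$ cannot be twice differentiable at $mT_u$.

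The hard part will be making this last step rigorous rather than merely formal. Knowing that $\ddot x$ has two distinct one-sided limits at $mT_u$ does not by itself forbid twice-differentiability, so I would invoke the elementary fact that if a function is $C^1$ near a point and its second derivative has a one-sided limit there, then the corresponding one-sided second derivative exists and equals that limit. Applying this on each side converts the jump of $\ddot x$ into \emph{unequal} one-sided second derivatives of $x$, which is what actually rules out twice-differentiability. I would also need the non-degeneracy check that the jump $\big(B_c+\partial_ug_c\big)\big[\dot u(mT_u^+)-\dot u(mT_u^-)\big]$ is genuinely nonzero, i.e.\ that the corner in the input is not accidentally annihilated by the dynamics; this is the one place where a mild genericity hypothesis on the input (a true change of slope at each $mT_u$) is required.
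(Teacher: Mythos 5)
There is a genuine gap: you have proved the lemma for the wrong input class. The paper's standing assumption (\ref{eqn:input}) is that the input is piecewise \emph{constant}, $u(t) = u_m$ for $t \in [mT_u,(m+1)T_u)$, and the paper's proof of this lemma invokes exactly that; the mention of piecewise linear inputs in the introduction is only an aside about what MPC schemes can generate, not the hypothesis here. Under the actual hypothesis, $u$ itself has a jump discontinuity at each $t = mT_u$ (whenever $u_m \neq u_{m-1}$), not merely a corner. This breaks the central step of your argument: the claim that ``because $u$ is continuous at $mT_u$, the composition $t \mapsto F(t,x(t))$ is continuous there, hence $\dot{x}$ is continuous and $x \in C^1$ everywhere'' is false in the paper's setting. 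Instead, the right-hand side of the ODE jumps across $mT_u$ by $B_c(u_m - u_{m-1}) + g_c\big(x(mT_u),u_m\big) - g_c\big(x(mT_u),u_{m-1}\big)$, which is generically nonzero. Consequently the loss of regularity occurs one order earlier than in your analysis: $\dot{x}$ (understood via the ODE with the right-continuous input, which is how the paper interprets ``once-differentiable everywhere'') is discontinuous at $mT_u$, and a function whose first derivative is discontinuous at a point cannot be twice differentiable there. That is the paper's entire argument for the negative part; no computation of $\ddot{x} = \partial_t F + \partial_x F\,\dot{x}$, no one-sided-limit lemma, and no genericity condition on $\dot{u}$ is needed, because the obstruction sits in $u$, not in $\dot{u}$.

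Two of your observations do survive the correction. The bootstrap on the open intervals $(mT_u,(m+1)T_u)$ is essentially the paper's argument, and it becomes simpler in the correct setting: with $u$ constant on each interval the ODE is autonomous there, so $\dot{x} = f(x)$ with $f$ being $(Q-1)$-times differentiable yields $x \in C^Q$ by the same induction. And your instinct that a nondegeneracy condition is tacitly required is correct --- the paper, too, silently assumes that the jump displayed above is nonzero (e.g., that $u_m \neq u_{m-1}$ and that the change is not annihilated by $g_c$) --- but the right condition concerns the jump of $u$, not the jump of $\dot{u}$. As written, your proof establishes a statement about a different system, and its global-$C^1$ step cannot be repaired within the paper's assumptions.
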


\begin{proof}
The first time-derivative of $x(t)$ is given by (\ref{eqn:model}), by definition.  Because the inputs are piecewise constant (\ref{eqn:input}), the input $u(t)$ is not differentiable at $t = mT_u$.  Because the first time-derivative of $x(t)$ is a function of $u(t)$, this means that $x(t)$ is not twice-differentiable at $t = mT_u$.  Recall that $u(t)$ is constant for $t \neq mT_u$.  Thus, $u(t)$ is smooth at $t \neq mT_u$.  This implies that $x(t)$ is $Q$-times differentiable at $t \neq mT_u$, because $g_c(x,u)$ is $Q-1$-times differentiable.
\end{proof}

\begin{remark}
These qualitative features mean that we cannot use LPR methods with order higher than zero (i.e., the Nadaraya-Watson estimator) without modifying the filtering scheme.  This is an important point, because the differentiability of the trajectory $x(t)$ makes it tempting to use LPR.  Yet, no theoretical convergence guarantees can currently be made in such a situation, and the behavior of these filters may be unpredictable.
\end{remark}

In light of these restrictions, we propose a modified sampling scheme.  Recall that $T_u$ is the sampling time for control inputs, and we define $T_s$ to be the sampling time for state measurements.  We require that $kT_s = T_u$ for some $k \in \mathbb{Z}_+$, and this scheme is illustrated in Fig. \ref{fig:sampling} for the case of $k=4$.  The advantage of this sampling scheme is that the trajectory $x(t)$ is piecewise smooth (infinitely differentiable) in between the samples taken at $mT_u$, because the control input $u(t)$ is piecewise constant.  This allows us to use LPR of order higher than zero (e.g., local linear regression), which can give significant improvements in estimation error over zeroth order LPR.

\begin{figure}
  \begin{center}
      \includegraphics{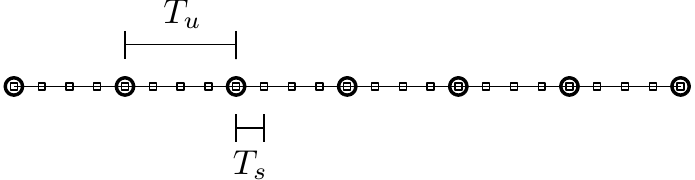}%\vspace{15pt}
    \caption{We use a sampling scheme with two sampling periods.  The inputs change at every $T_u$ units of time, and the states are measured every $T_s$ units of time.  In this example, $kT_s=T_u$ with $k=4$.}
    \label{fig:sampling}
  \end{center}
\end{figure}

%Unfortunately, the control law $u(\cdot)$ generated by MPC is known to be discontinuous for most real systems.  Thus, in practice we cannot make use of spline fitting or LPR because the trajectory $\dot{x}(t)$ will only be differentiable once, and its second derivative will be discontinuous and not exist at each sampling instant.  Because $\dot{x}(t)$ is the solution of the ODE defined in (\ref{eqn:model}), it means that $x(t)$ is differentiable.  However, (\ref{eqn:model}) indicates that discontinuity in $u(t)$ leads to discontinuity in $\dot{x}(t)$.  Thus, $\dot{x}(t)$ is not differentiable.

If the trajectory of the real system is $x(t)$, then consider a measurement model
\begin{equation}
\xi_i(jT_s + mT_u) = x_i(jT_s + mT_u) + \epsilon_i, \qquad j \in \mathbb{Z}_+ : jT_s \in [0,T_u],
\end{equation}
where $\epsilon_i$ are independent and identically distributed (i.i.d.) random variables with zero mean and bounded values $l_\mu \leq [\epsilon_i]_\mu \leq s_\mu$.  The notation $[\epsilon_i]_\mu$ indicates the $\mu$-th component of the $i$-th noise vector.  Suppose that we have made measurements for $m = 0, \ldots, n$.  This measurement model corresponds to the sampling scheme seen in Figure \ref{fig:sampling}.

\subsection{Filter Design}

Suppose $\kappa(\nu)$ is a kernel function, which is a bounded even function with finite support.  We will use $\lambda,\rho$ to denote left and right differentiability, and $r$ is the polynomial order of the filter.  Let $h_{\lambda;m,i},h_{\rho;m,i} \in \mathbb{R}$ be bandwidth parameters.  Next we define a diagonal matrix $R_{m,i}$ that is used to filter to the right side of the $i$-th entry of the measurement at $t = mT_u$; its entries are given by 
\begin{equation}
R_{m,i} = \text{diag}\big\{\kappa(0), \kappa(T_s/h_{\lambda;m,i}),\ldots,\kappa(kT_s/h_{\lambda;m,i})\big\}.
\end{equation}
Similarly, we define a diagonal matrix $L_{m,i}$ that is used to filter to the left side of the $i$-th entry of the measurement at $t = mT_u$:
\begin{equation}
L_{m,i} = \text{diag}\big\{\kappa(kT_s/h_{\rho;m,i}),\ldots,\kappa(T_s/h_{\rho;m,i}),\kappa(0)\big\}.
\end{equation}
Note that the $R_{m,i}$ matrix uses the bandwidth $h_{\lambda;m,i}$, and $L_{m,i}$ uses bandwidth $h_{\rho;m,i}$.  The reason is that filtering to the right of a measurement requires left differentiability, while filtering to the left of a measurement requires right differentiability.  Lastly, we define the Vandermonde matrix
\begin{equation}
\Gamma = \begin{bmatrix} 1 & 0 & \ldots & 0 \\ 1 & T_s & \ldots & T_s^r \\ \vdots & \vdots & \vdots & \vdots \\ 1 & kT_s & \ldots & k^rT_s^r \end{bmatrix}.
\end{equation}

We are now ready to design the filter.  The filter coefficients are given by
\begin{equation}
\begin{aligned}
w_{m,i} &= e_1'(\Gamma'L_{m,i}\Gamma)^{-1}\Gamma'L_{m,i}\\
v_{m,i} &= e_1'(\Gamma'R_{m,i}\Gamma)^{-1}\Gamma'R_{m,i},
\end{aligned}
\label{eqn:wm}
\end{equation}
and $e_1$ is the unit-vector with a $1$ in the first position and zeros everywhere else.  The idea is that $w_{m,i}$ filters on the left side of $t = mT_u$ and $v_{m,i}$ filters on the right side of $t = mT_u$.  As time advances to $t = nT_u$, we first filter on the left side of $\xi(nT_u)$ (because there is no right side).  At the next point in time $t = (n+1)T_u$ we filter on both sides of $\xi(nT_u)$.  Consequently, the filter is time-varying.

Let the number within the angled brackets $\langle \cdot \rangle$ denote the (discrete) time at which the filter is computed.  The raw state estimates (for times $t = mT_u$, for $m = 0, \ldots,n$) computed at time $t = nT_u$ are given by
\begin{equation}
\begin{aligned}
&[\overline{x}_n]_i\langle n\rangle = \textstyle\sum_{j=0}^k [w_{n-1,i}]_j\xi_i(jT_s + (n-1)T_u)\\
&[\overline{x}_{n-1}]_i\langle n\rangle  = 1/2 \cdot [\overline{x}_{n-1}]_i\langle n-1\rangle + 1/2\cdot\textstyle\sum_{j=0}^k [v_{n-1,i}]_j\xi_i(jT_s + (n-1)T_u)\\
&[\overline{x}_m]_i\langle n\rangle = [\overline{x}_m]_i\langle n-1\rangle, \qquad \forall m < n-1.
\end{aligned}
\label{eqn:fil1}
\end{equation}
The state estimates are given by
\begin{equation}
\label{eqn:saturate}
[\hat{x}_m]_i\langle n\rangle = \min\Big\{\xi_i(mT_u) - l_i,\max\big\{\xi_i(mT_u)-s_i, [\overline{x}_m]_i\langle n\rangle\big\}\Big\}, \qquad \forall m.
\end{equation}
The operation in (\ref{eqn:saturate}) maintains the bounds on the noise, and it makes sure that the filter saturates if it tries to exceed the bounds of the noise.  This filtering is well-defined because of the piecewise continuity of the control input $u(t)$, and it is consistent in a pointwise sense, as the following theorem shows.

\begin{theorem}[Ruppert and Wand, 1994]
\label{theorem:filter}
If $T_u$ is fixed, $r$ is the polynomial order of the filter, and $k \rightarrow \infty$ such that $kT_s = T_u$; then, the filter defined in (\ref{eqn:fil1})-(\ref{eqn:saturate}) is consistent: $\|x_m - \hat{x}_m\langle n\rangle\| = O_p(k^{-(r+1)/(2r+3)})$.
\end{theorem}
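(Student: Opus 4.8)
The plan is to recognize that the weight vectors $w_{m,i}$ and $v_{m,i}$ defined in (\ref{eqn:wm}) are exactly the coefficients of an order-$r$ local polynomial regression (LPR) estimator, evaluated at the node $t = mT_u$. Indeed, $e_1'(\Gamma'L_{m,i}\Gamma)^{-1}\Gamma'L_{m,i}$ is the first row of the solution to the weighted least-squares problem whose design matrix is the Vandermonde matrix $\Gamma$ (rows $1, jT_s, \ldots, (jT_s)^r$) and whose weights are the kernel values $\kappa(jT_s/h)$; left-multiplying by $e_1'$ extracts the fitted intercept, i.e.\ the estimate of the regression function $t \mapsto x_i(t)$ at the fitting origin. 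Since the state is sampled on the equispaced grid $jT_s$, $j = 0,\ldots,k$, the design density is uniform, which removes the design-density factors from the Ruppert--Wand bias and variance expressions and makes the classical fixed-design asymptotics directly applicable.

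First I would fix a coordinate $i$ and a node $m$ and decompose the estimation error of the raw estimate $[\overline{x}_m]_i$ into a bias term and a stochastic term. For the bias, the preceding differentiability Lemma guarantees that on each open interval between the kinks the trajectory $x_i(t)$ is $Q$-times differentiable; assuming $Q \ge r+1$, a Taylor expansion against the monomial basis carried by $\Gamma$ shows that the order-$r$ fit annihilates all polynomial terms up to degree $r$, leaving a leading bias of order $O(h^{r+1})$, where $h$ is the relevant bandwidth $h_{\lambda;m,i}$ or $h_{\rho;m,i}$. For the stochastic part, the estimate is a fixed linear combination of the independent, zero-mean, bounded noises $\epsilon_i$, so its variance is controlled by $\sum_j [w_{m,i}]_j^2$, which scales as $O(1/(kh))$ because the number of samples inside the kernel support is proportional to $kh$. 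Balancing the squared bias $h^{2(r+1)}$ against the variance $1/(kh)$ gives the optimal bandwidth $h \sim k^{-1/(2r+3)}$ and mean-squared error $O(k^{-2(r+1)/(2r+3)})$; a Markov (Chebyshev) argument then upgrades this to the in-probability statement $\|x_m - \hat{x}_m\langle n\rangle\| = O_p(k^{-(r+1)/(2r+3)})$.

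The hard part will be the boundary behavior. By construction the estimate is formed at the kink $t = mT_u$, where $x(t)$ is only once-differentiable, so the fit uses data lying strictly on one side of the node; inspecting (\ref{eqn:fil1}) shows that in every case the node sits at an \emph{endpoint} of the data window (the right endpoint for the left-filter $w$, the left endpoint for the right-filter $v$). Estimating a function at the endpoint of its data window is a boundary problem, and for a generic kernel smoother such as Nadaraya--Watson the bias would degrade to $O(h)$, destroying the rate. The crucial fact --- and the content of the Ruppert--Wand result being invoked --- is that LPR is \emph{boundary-adaptive}: fitting a degree-$r$ polynomial automatically corrects the leading boundary bias, so the $O(h^{r+1})$ order survives at the endpoint. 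I would therefore verify the hypotheses of that result in our setting: that $\kappa$ is bounded, even, and compactly supported; that the one-sided regression function is $C^{r+1}$ (which holds by the Lemma once $Q \ge r+1$); and that the rescaled moment matrix $h^{-1}\Gamma'L_{m,i}\Gamma$ converges to an invertible limit, so that the weights are well-defined and uniformly bounded as $k \to \infty$.

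Finally I would argue that the two postprocessing steps preserve the rate. The averaging in the second line of (\ref{eqn:fil1}) combines a left-sided estimate at time $n-1$ with a right-sided estimate at time $n$; each is consistent at the above rate and their data windows overlap only at the single node $(n-1)T_u$, so the convex combination is again $O_p(k^{-(r+1)/(2r+3)})$. The saturation (\ref{eqn:saturate}) clamps each component to the interval $[\xi_i(mT_u) - s_i,\, \xi_i(mT_u) - l_i]$; the noise bounds $l_i \le [\epsilon_i]_i \le s_i$ imply $\xi_i(mT_u) - s_i \le x_i(mT_u) \le \xi_i(mT_u) - l_i$, so the true value lies inside the clamping interval. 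The clamp is thus the projection of the estimate onto a convex set containing the truth, which is non-expansive and can only shrink the error. Collecting the preserved rate at each node yields the stated bound.
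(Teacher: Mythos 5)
Your proposal is correct and follows the same skeleton as the paper's proof, which consists of exactly two steps: the observation that the Ruppert--Wand (1994) local polynomial regression results apply to the raw filter (\ref{eqn:wm})--(\ref{eqn:fil1}), and the observation that the saturation (\ref{eqn:saturate}) can only shrink the error, since the noise bounds place the true state inside the clamping interval (the paper states this as $\|x_m - \hat{x}_m\langle n\rangle\| \leq \|x_m - \overline{x}_m\langle n\rangle\|$); your treatment of the saturation step is this identical argument. The difference is that where the paper treats the LPR rate as a black box, you unpack the citation: the bias--variance decomposition with one-sided Taylor expansion giving bias $O(h^{r+1})$, the variance $O(1/(kh))$, the balance $h \sim k^{-1/(2r+3)}$ yielding the stated rate via Chebyshev, and --- crucially --- the boundary adaptivity of order-$r$ local polynomial fitting, which is indeed the precise reason the Ruppert--Wand result (rather than generic kernel-smoothing theory) must be invoked at the kinks $t = mT_u$, where the data window lies entirely on one side of the fitting point. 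You also explicitly verify that the $1/2$--$1/2$ averaging in the second line of (\ref{eqn:fil1}) preserves the rate, a step the paper folds silently into its citation. What the paper's version buys is brevity and rigor by reference; what yours buys is self-containedness and an explicit accounting of the hypotheses the citation hides (one-sided smoothness $Q \geq r+1$ supplied by the differentiability lemma, the bandwidth scaling, and invertibility of the rescaled moment matrix so the weights stay bounded), at the cost that your core asymptotic argument is a sketch of the Ruppert--Wand derivation rather than a complete proof of it.
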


\begin{proof}
Strictly speaking, the result in \cite{ruppertwand1994} applies to the filter defined in (\ref{eqn:wm})-(\ref{eqn:fil1}).  Consistency with respect to (\ref{eqn:saturate}) is established by noting that the bounds on the noise imply that $\|x_m - \hat{x}_m\langle n\rangle\| \leq \|x_m - \overline{x}_m\langle n\rangle\|$.
\end{proof}
%\begin{proof}
%The filter is an implementation of first order ($r = 1$) LPR.  Note that as $k \rightarrow \infty$, we have that $h_{*,i,n} \rightarrow 0$ and $1/(kh_{*,i,n}) \rightarrow \infty$.  Consequently, the result follows from Theorem (PUT THEOREM) or (PUT CITATION).
%\end{proof}

\begin{remark}
Because $k = T_u/T_s$, this theorem intuitively says that the filter performs well as long as $T_s$ is much smaller than $T_u$.
\end{remark}

We also have the following lemma which discusses the finite-sample properties of (\ref{eqn:saturate}).  The intuition is that if the measurement noise is bounded and all states are measured, then the filter preserves the property that the state estimates remain within a bounded distance of the true states.  Note that the Minkowski sum \cite{schneider1993} of two sets $\mathcal{U},\mathcal{V}$ is defined as $\mathcal{U} \oplus \mathcal{V} = \{u + v : u \in \mathcal{U}; v \in \mathcal{V}\}$.
\begin{lemma}
\label{lemma:xhat}
Under the assumptions delineated above, we have that $\hat{x}_m \in x_m \oplus \mathcal{E}$, where $\mathcal{E} = \{\epsilon : l_j \leq [\epsilon]_j \leq s_j\} \oplus (-\{\epsilon : l_j \leq [\epsilon]_j \leq s_j\})$.
\end{lemma}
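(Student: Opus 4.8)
The plan is to prove the containment entrywise, exploiting the fact that $\mathcal{E}$ is a Cartesian product of intervals. First I would observe that $\mathcal{E}$ is itself a box: since $\{\epsilon : l_j \leq [\epsilon]_j \leq s_j\} = \prod_j [l_j, s_j]$ and its reflection $-\{\epsilon : l_j \leq [\epsilon]_j \leq s_j\} = \prod_j [-s_j, -l_j]$, and the Minkowski sum of two boxes is again a box, we have $\mathcal{E} = \prod_j [l_j - s_j,\, s_j - l_j]$. Consequently, to establish $\hat{x}_m \in x_m \oplus \mathcal{E}$, which is equivalent to $\hat{x}_m - x_m \in \mathcal{E}$, it suffices to verify for each entry $i$ that $[\hat{x}_m]_i - [x_m]_i \in [l_i - s_i,\, s_i - l_i]$.

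Next I would unwind the saturation in (\ref{eqn:saturate}). Since $l_i \leq s_i$, the interval endpoints satisfy $\xi_i(mT_u) - s_i \leq \xi_i(mT_u) - l_i$, so the nested $\min$/$\max$ is the standard clamping operation. A short case analysis on whether the raw estimate $[\overline{x}_m]_i\langle n\rangle$ falls below, inside, or above this interval shows that in every case $[\hat{x}_m]_i\langle n\rangle \in [\xi_i(mT_u) - s_i,\, \xi_i(mT_u) - l_i]$, regardless of the (a priori unconstrained) value of the raw estimate.

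Then I would bring in the measurement model. At the grid point $t = mT_u$ (the $j = 0$ sample) we have $\xi_i(mT_u) = [x_m]_i + [\epsilon]_i$ with $l_i \leq [\epsilon]_i \leq s_i$. Substituting into the clamping interval gives $[\hat{x}_m]_i - [x_m]_i \in [[\epsilon]_i - s_i,\, [\epsilon]_i - l_i]$. Using $[\epsilon]_i \in [l_i, s_i]$ to bound the endpoints yields $[\epsilon]_i - s_i \geq l_i - s_i$ and $[\epsilon]_i - l_i \leq s_i - l_i$, so $[\hat{x}_m]_i - [x_m]_i \in [l_i - s_i,\, s_i - l_i]$, which is precisely the $i$-th factor of $\mathcal{E}$. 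Assembling these containments over all entries $i$ completes the argument.

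I do not expect a genuine obstacle here, since the entire argument is elementary interval arithmetic. The only points requiring care are recognizing in the first step that $\mathcal{E}$ really is the product box $\prod_j [l_j - s_j,\, s_j - l_j]$ (so that entrywise containment is equivalent to membership in $\mathcal{E}$), and verifying that the clamping bound holds uniformly over all possible values of the raw estimate $\overline{x}_m$. Once those are in place, the substitution from the measurement model is immediate and the noise bounds close the estimate.
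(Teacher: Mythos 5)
Your proof is correct and takes essentially the same approach as the paper's: the saturation in (\ref{eqn:saturate}) confines $\hat{x}_m$ to $\xi_m \oplus (-\{\epsilon : l_j \leq [\epsilon]_j \leq s_j\})$, the noise bounds confine $\xi_m$ to $x_m \oplus \{\epsilon : l_j \leq [\epsilon]_j \leq s_j\}$, and the two containments are combined. The only difference is cosmetic: where the paper invokes properties of the Minkowski sum abstractly, you carry out the same algebra entrywise, identifying $\mathcal{E}$ explicitly as the box $\prod_j [l_j - s_j,\, s_j - l_j]$.
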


\begin{proof}
Note that (\ref{eqn:saturate}) enforces that $[\xi_m]_j - s_j \leq [\hat{x}_m]_j \leq [\xi_m]_j - l_j$, which can be rewritten as $\hat{x}_m \in \xi_m \oplus (-\{\epsilon : l_j \leq [\epsilon]_j \leq s_j\})$.  The bounds on the noise $[x_m]_j + l_j \leq [\xi_m]_j \leq [x_m]_j + s_j$ are equivalent to having $\xi_m \in x_m \oplus \{\epsilon : l_j \leq [\epsilon]_j \leq s_j\}$.  The result follows from properties of $\oplus$ \cite{schneider1993}.
\end{proof}

\subsection{Filter Implementation}

Because the filter is simply a weighted sum of measurements (\ref{eqn:fil1}), the largest difficulty with implementation is in computing the filter coefficients (\ref{eqn:wm}).  The first step in doing this is to choose the order of the filter.  Empirical results show that linear ($r=1$) or quadratic ($r=2$) LPR typically gives good results.  For clarity of presentation, we focus here on the case of $r=1$.  
%\label{eqn:opth} 

 Having chosen the order of the filter, the next step is to compute the bandwidth parameters $h_{\lambda;m,i},h_{\rho;m,i}$.  To make the notation compact, let $?$ be a blank spot that is either replaced with $? = \rho$ or $? = \lambda$.  Using results from \cite{fangijbels1995}, it can be shown that the optimal bandwidths for $r=1$ are approximately given by
\begin{equation}
\begin{aligned}
h_{?;m,i} &= \left(\frac{a\sigma^2T_u}{2\ddot{x}_i(mT_u^?)k}\right)^{1/5}\\
a &= \int_\mathbb{R}{\kappa^2(\nu)\,d\nu}\Bigg/\left(\int_\mathbb{R}{\nu^2\kappa(\nu)\,d\nu}\right)^2,
\end{aligned}
\end{equation}
and the second time-derivative $\ddot{x}_i(mT_u^?)$ is the left-sided derivative if $?=\lambda$ (or right-sided derivative if $?=\rho$).  Unsimplified expressions for the cases $r > 1$ can be found in \cite{fangijbels1995}.  We can approximate the values of these second time-derivatives by using (\ref{eqn:model}).  More specifically, the estimated values are given by $\hat{\ddot{x}}_i(mT_u^\rho) = \left[A_c^2\xi(mT_u) + A_cB_cu_m\right]_i \label{eqn:xddr}$ and $\hat{\ddot{x}}_i(mT_u^\lambda) = \left[A_c^2\xi(mT_u) + A_cB_cu_{m-1}\right]_i$.

Because it is time consuming to compute the filter coefficients (\ref{eqn:wm}), we suggest an implementation in which they are precomputed.  Define a set $\mathcal{H} = \{h_1, \ldots, h_{max}\}$, and compute the filter coefficients for each value in $\mathcal{H}$.  Then, when we would like to filter, we estimate the time derivatives $\hat{\ddot{x}}_i(mT_u^\rho)$ and $\hat{\ddot{x}}_i(mT_u^\lambda)$, and use these to compute $h_{?;m,i}$.  The closest value in $\mathcal{H}$ is selected, and the corresponding set of precomputed filter coefficients are used to do the filtering as defined in (\ref{eqn:fil1})-(\ref{eqn:saturate}).

\section{Epi-convergence Proofs}
\label{section:epi}
We provide proofs of the theorems regarding convergence of the control law of LBMPC to an MPC that knows the unmodeled dynamics, for both the case where the oracle is parametric and the case where the oracle is nonparametric.  The key for these results is that the system trajectory must have a property called sufficient excitation (SE), which intuitively means that all modes of the system are perturbed so that they can be identified.  The theorem on convergence is trivial in the parametric case, because it results from combining two existing theorems that are valid under SE.  In the nonparametric case, we consider both a generic oracle and an oracle that we have designed, which we call the $L2$-regularized Nadaraya-Watson (L2NW) estimator.  The proofs for this case are more involved, since they require showing epi-convergence of the nonparametric oracles under the notion of SE.

\subsection{Parametric Oracle}

\begin{proof}[Proof of Theorem 5 in \cite{aswani2011v1}]
The proof simply requires application of existing theorems.  If $\lambda_n$ converges in probability to $\lambda_0$, then the result is true by Proposition 2.1 of \cite{vogellachout2003b}.  The required convergence in probability occurs under SE \cite{lai1979,jennrich1969,malinvaud1970}, and so the result trivially follows.
\end{proof}

\begin{remark}
The situation in which the states are measured with noise requires the use of the continuous mapping theorem \cite{vaart2000} taken in conjunction with Theorem \ref{theorem:filter}.  For the case where the parameters enter linearly, the hypothesis of the continuous mapping theorem is satisfied because the linear least squares estimate $\lambda_n$ is continuous with respect to the the measurements given SE \cite{lai1979}.  For the nonlinear case, we need to explicitly assume that there is a unique $\lambda_0$; under this assumption, $\lambda_0$ is a minimizer to the least squares problem with no noise, and so it is continuous with respect to measurements by the Berge maximum theorem \cite{berge1963}.  (The Berge maximum theorem gives upper hemicontinuity of $\lambda_0$, which results in continuity because $\lambda_0$ is single-valued due to its assumed uniqueness.)  This allows for the use of the continuous mapping theorem.
\end{remark}

\subsection{Nonparametric Oracle}

We first prove convergence of the control law of LBMPC that uses a generic nonparametric oracle, under an assumption of SE.  This result will then be used to prove a corresponding theorem for the case in which the oracle is taken to be the L2NW estimator.  In this section, we will refer to the functions $\tilde{\psi}_n,\phi,\tilde{\psi}_0$ that are defined in Theorem 4 of \cite{aswani2011v1}.

The first theorem we present pertains to convergence in probability of the composition of functions that individually converge in probability.  We need the following theorem in order to show epi-convergence of $\tilde{\psi}_n$, for the LBMPC problem that uses a nonparametric oracle that stochastically converges in the appropriate sense (which we will define later).

\begin{theorem}
\label{theorem:pcomp}
Let $\mathcal{X}_v \subset \mathbb{R}^\alpha$, $\mathcal{X}_w \subset \mathbb{R}^\beta$, and $\mathcal{R} \subset \mathbb{R}^\gamma$ be closed and compact sets, and assume that we have a sequence of functions $V_n(x) : \mathcal{X}_v \rightarrow \mathcal{X}_w$ and $W_n(x) : \mathcal{X}_w \rightarrow \mathcal{R}$ which converge in probability to $V(x),W(x)$ as $\sup_{x \in \mathcal{X}_v}\|V_n(x) - V(x)\| = O_p(r_n)$ and $\sup_{x \in \mathcal{X}_w}\|W_n(x) - W(x)\| = O_p(s_n)$.  If $W$ is Lipschitz continuous with constant $L_w$, then $\sup_{x \in \mathcal{X}_v}\|W_n(V_n(x)) - W(V(x))\| = O_p(c_n)$, where $c_n = \max\{r_n,s_n\}$.
\end{theorem}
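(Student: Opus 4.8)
The plan is to control the composition error by a single triangle-inequality decomposition, introducing the intermediate quantity $W(V_n(x))$, and then to bound the two resulting terms by the two hypotheses separately. The key realization is that one term is handled by the \emph{uniform} convergence of $W_n$ (evaluated at the random argument $V_n(x)$), while the other is handled by the Lipschitz continuity of the limit $W$ together with the convergence of $V_n$.

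Concretely, I would first fix $x \in \mathcal{X}_v$ and write
\begin{equation}
\|W_n(V_n(x)) - W(V(x))\| \leq \|W_n(V_n(x)) - W(V_n(x))\| + \|W(V_n(x)) - W(V(x))\|.
\end{equation}
Since $V_n$ maps $\mathcal{X}_v$ into $\mathcal{X}_w$, the argument $V_n(x)$ lies in the domain on which $W_n$ converges uniformly to $W$, so the first term is bounded by $\sup_{y \in \mathcal{X}_w}\|W_n(y) - W(y)\|$ uniformly in $x$. For the second term, the Lipschitz hypothesis on $W$ gives $\|W(V_n(x)) - W(V(x))\| \leq L_w\|V_n(x) - V(x)\| \leq L_w \sup_{x \in \mathcal{X}_v}\|V_n(x) - V(x)\|$. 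Taking the supremum over $x \in \mathcal{X}_v$ on the left and combining then yields
\begin{equation}
\sup_{x \in \mathcal{X}_v}\|W_n(V_n(x)) - W(V(x))\| \leq \sup_{y \in \mathcal{X}_w}\|W_n(y) - W(y)\| + L_w\sup_{x \in \mathcal{X}_v}\|V_n(x) - V(x)\| = O_p(s_n) + L_w \cdot O_p(r_n).
\end{equation}

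The final step is the stochastic-order bookkeeping: a fixed multiple of an $O_p(r_n)$ term is again $O_p(r_n)$, and a sum of an $O_p(r_n)$ term and an $O_p(s_n)$ term is $O_p(\max\{r_n,s_n\}) = O_p(c_n)$. This last identity follows routinely from the definition of $O_p$ by applying a union bound to the tail events governing each summand. I expect the only genuinely delicate point to be conceptual rather than computational: the uniform convergence of $W_n$ must be invoked at the \emph{random} point $V_n(x)$, not at $V(x)$, which is exactly why the hypothesis is phrased as convergence uniform over all of $\mathcal{X}_w$ and why the closedness and compactness of the sets are assumed (guaranteeing the images remain in the stated domains). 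Once this is recognized, the argument is a short triangle-inequality estimate with no further obstacles.
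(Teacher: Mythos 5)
Your proposal is correct and follows essentially the same route as the paper's proof: the identical decomposition through the intermediate term $W(V_n(x))$, with the first piece bounded by the uniform convergence of $W_n$ over $\mathcal{X}_w$ (valid precisely because $V_n(x) \in \mathcal{X}_w$) and the second by the Lipschitz continuity of $W$. The only difference is bookkeeping order --- you apply the triangle inequality deterministically and convert to $O_p$ statements at the end, whereas the paper splits the tail probabilities directly --- which is immaterial (and, if anything, your version sidesteps the need for the $\epsilon/2$ splitting that the paper's probability-level inequality implicitly requires).
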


\begin{proof}
Applying the triangle inequality gives
\begin{multline}
\mathbb{P}\Big(\textstyle\sup_{x \in \mathcal{X}_v}|W_n(V_n(x)) - W(V(x))|/c_n \geq \epsilon\Big) \leq \\
\mathbb{P}\Big(\textstyle\sup_{x \in \mathcal{X}_v}|W_n(V_n(x)) - W(V_n(x))|/c_n \geq \epsilon\Big) + \\ \mathbb{P}\Big(\textstyle\sup_{x \in \mathcal{X}_v}|W(V_n(x)) - W(V(x))|/c_n \geq \epsilon\Big). \label{eqn:bigone}
\end{multline}
%where
%\begin{align}
%&\mathbb{P}(\sup_{x \in \mathcal{X}_v^*}|W_k(V_k(x)) - W(V_k(x))|/c_k \geq \epsilon) \label{eqn:1}\\
%&\mathbb{P}(\sup_{x \in \mathcal{X}_v^*}|W(V_k(x)) - W(V(x))|/c_k \geq \epsilon) \label{eqn:2}.
%\end{align}
%\begin{equation}
%\label{eqn:star2}
The first term on the right in (\ref{eqn:bigone}) can be bounded as
\begin{equation}
\label{eqn:star1}
\mathbb{P}\Big(\textstyle\sup_{x \in \mathcal{X}_v}|W_n(V_n(x)) - W(V_k(x))|/c_n \geq \epsilon\Big) \leq \mathbb{P}\Big(\textstyle\sup_{x \in \mathcal{X}_w}|W_n(x) - W(x)|/c_n \geq \epsilon\Big),
\end{equation}
and so the limit of (\ref{eqn:star1}) by assumption is $\lim \mathbb{P}(\sup_{x \in \mathcal{X}_v}|W_n(V_n(x)) - W(V_n(x))|/c_n \geq \epsilon) = 0$.  The second term on the right in (\ref{eqn:bigone}) is bounded using the Lipschitz constant as
\begin{equation}
\mathbb{P}\Big(\textstyle\sup_{x \in \mathcal{X}_v}|W(V_n(x)) - W(V(x))|/c_n \geq \epsilon\Big) \leq \mathbb{P}\Big(\textstyle\sup_{x \in \mathcal{X}_v}L_w|V_n(x) - V(x)|/c_n \geq \epsilon\Big),
\end{equation}
and taking its limit gives by assumption that $\lim \mathbb{P}(\sup_{x \in \mathcal{X}_v}|W(V_n(x)) - W(V(x))|/c_n \geq \epsilon) = 0$.  The result follows by taking the limit of (\ref{eqn:bigone}) and observing that the limit is equal to zero.
\end{proof}

\begin{remark}
The theorem shows that convergence in probability is preserved under composition, but the one subtlety in the result and subsequent proof is the issue of domains of convergence.  We are composing two functions $W_n(V_n(x))$, and convergence occurs as long as the range of the function on the inside $V_n(\cdot)$ lies within the domain of convergence of the function on the outside $W_n(\cdot)$.
\end{remark}

\begin{proof}[Proof of Theorem 6 in \cite{aswani2011v1}]
Note that equality constraint in LBMPC
\begin{equation}
\tilde{x}_{n+i+1} = A\tilde{x}_{n+i} + B\check{u}_{n+i} + \mathcal{O}_n(\tilde{x}_{n+i},\check{u}_{n+i})
\end{equation}
recursively defines $\tilde{x}_{n+i+1}$, for $i = \{0,\ldots,N-1\}$, as functions of only $\tilde{x}_n$ and $c_{n+i}$.  For example, the equation for $\tilde{x}_{n+2}$ is given by
\begin{multline}
\label{eqn:long}
\tilde{x}_{n+2}(\tilde{x}_n, c;\mathcal{O}_n) = A^2\tilde{x}_n + AB(K\tilde{x}_n + c_n) + A\mathcal{O}_n(\tilde{x}_n, K\tilde{x}_n + c_n) \\
+ B(K(A\tilde{x}_n + B(K\tilde{x}_n + c_n) + \mathcal{O}_n(\tilde{x}_n, K\tilde{x}_n + c_n)) + c_{n+1}) \\
+ \mathcal{O}_n(A\tilde{x}_n + B(K\tilde{x}_n + c_n) + \mathcal{O}_n(\tilde{x}_n, K\tilde{x}_n + c_n), K(A\tilde{x}_n \\
+ B(K\tilde{x}_n + c_n) + \mathcal{O}_n(\tilde{x}_n, K\tilde{x}_n + c_n)) + c_{n+1}).
\end{multline}
Using our assumption along with the continuous mapping theorem, we have that
\begin{equation}
\sup_{x_n : \phi(x_n) \neq \emptyset}\|\tilde{x}_{n+i}(x_n,\mathcal{O}_n)-\tilde{x}_{n+i}(x_n,g)\| = O_p(r_n),
\end{equation}
where $r_n$ is the convergence rate by assumption.  Since $\psi_n$ is continuous, we can compose it with $\tilde{x}_{n+i}$ using Theorem \ref{theorem:pcomp}.  This gives that $\sup_{x_n : \phi(x_n) \neq \emptyset}\|\tilde{\psi}_n - \tilde{\psi}_0\| = O_p(r_n)$.

The last step requires showing that this condition is equivalent to lower semicontinuity in probability.  For notational convenience, we will define $c = [c_n'\ \ldots\ c_{n+N-1}']'$.  Because $\tilde{\psi}_0$ is continuous, given $\epsilon > 0$ and a point $x_0,c,\theta$, there exists a neighborhood $U\{x_0,c,\theta\}$ such that 
\begin{equation}
\label{eqn:contne}
|\tilde{\psi}_0(\zeta) - \tilde{\psi}_0(x_0,c,\theta)| < \epsilon/2,
\end{equation}
for all $\zeta \in U\{x_0,c,\theta\}$.  Now consider the expression
\begin{equation}
\alpha = \mathbb{P}\Big(\textstyle\inf_{\zeta \in U\{x_0,c,\theta\}} \tilde{\psi}_n(\zeta) < \tilde{\psi}_0(x_0,c,\theta) - \epsilon\Big) \leq \mathbb{P}\Big(\textstyle\sup_{\zeta \in U\{x_0,c,\theta\}} |\tilde{\psi}_n(\zeta) - \tilde{\psi}_0(x_0,c,\theta)| > \epsilon\Big).
\end{equation}
Using (\ref{eqn:contne}), we can further bound the expression above by
\begin{equation}
\alpha \leq \mathbb{P}\Big(\textstyle\sup_{\zeta \in U\{x_0,c[\cdot]\}} |\tilde{\psi}_n(\zeta) - \tilde{\psi}_0(\zeta)| > \epsilon/2\Big).
\end{equation}
Taking the limit, we have that $\lim \alpha = 0$, and so the result follows by applying Proposition 5.1 of \cite{vogellachout2003a}.
\end{proof}

\subsection{L2NW Estimator}

Showing that the L2NW estimator leads to convergence of the control law of LBMPC under an assumption of SE requires additional work.  For ease of reference, we give one expression of the L2NW estimator defined in \cite{aswani2011v1}.  Let $X_i = [x_i'\  u_i']'$, $Y_i = x_{i+1} - (Ax_i+Bu_i)$, and $\Xi_i = \|\xi - x_i\|^2/h^2$, where $X_i \in \mathbb{R}^{p+m}$ and $Y_i \in \mathbb{R}^p$ are data and $\xi = [x' \ u']'$ are free variables.  We define any function $\kappa : \mathbb{R} \rightarrow \mathbb{R}_+$ to be a kernel function if it has (a) finite support $\kappa(\nu) = 0$ for $|\nu| \geq 1$, (b) even symmetry $\kappa(\nu) = \kappa(-\nu)$, (c) positive values $\kappa(\nu) > 0$ for $|\nu| < 1$, (d) differentiability (i.e., the derivative $d\kappa$ exists), and (e) nonincreasing values of $\kappa(\nu)$ over $\nu \geq 0$.  The L2NW estimator is defined as
\begin{equation}
\label{eqn:rnw}
\mathcal{O}_n(x,u; X_i, Y_i) = \frac{\sum_i Y_i \kappa(\Xi_i)}{\lambda + \sum_i \kappa(\Xi_i)},
\end{equation}
where $\lambda \in \mathbb{R}_+$.  If $\lambda = 0$, then (\ref{eqn:rnw}) is simply the Nadaraya-Watson (NW) estimator.  The $\lambda$ term acts to regularize the problem and ensures differentiability.

We begin by proving a uniform version of a theorem that is called either the continuous mapping theorem \cite{vaart2000} or Slutsky's theorem \cite{bickeldoksum2007}, depending on the author.  

\begin{lemma}
\label{lemma:cmt}
Given random variables $V_n,V \in \mathcal{V}$, for all $n \in \mathbb{Z}_+$, such that $\|V_n - V\| = O_p(r_n)$;  if $L(x,v) : \mathcal{X} \times \mathcal{V} \rightarrow \mathbb{R}$ is a continuous function and $\mathcal{X},\mathcal{V}$ are compact sets, then $\sup_{x \in \mathcal{X}} |L(x, V_n) - L(x,V)| = O_p(r_n)$.
\end{lemma}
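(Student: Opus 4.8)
The plan is to separate the two issues bundled into the statement: uniformity over $x \in \mathcal{X}$, and transfer of the stochastic rate from $V_n$ to $L(\cdot, V_n)$. The uniformity is the genuinely new content relative to the pointwise continuous mapping theorem, and it is purchased entirely by compactness. First I would invoke the Heine--Cantor theorem: since $\mathcal{X} \times \mathcal{V}$ is compact and $L$ is continuous, $L$ is \emph{uniformly} continuous on $\mathcal{X} \times \mathcal{V}$. This produces a modulus of continuity in the second argument that does not depend on $x$, which is exactly what is needed to control the supremum over $x$ rather than a single fixed $x$.

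Concretely, I would define $\omega(\eta) = \sup\{|L(x,v) - L(x,v')| : x \in \mathcal{X},\ v,v' \in \mathcal{V},\ \|v - v'\| \le \eta\}$. Uniform continuity gives $\omega(\eta) \to 0$ as $\eta \to 0^+$, and $\omega$ is nondecreasing. The key pathwise bound is then immediate: for every realization, $\sup_{x \in \mathcal{X}} |L(x, V_n) - L(x, V)| \le \omega(\|V_n - V\|)$, since $V_n, V \in \mathcal{V}$ and, for each fixed $x$, the increment of $L$ in its second argument is dominated by $\omega$ evaluated at the gap $\|V_n - V\|$, with the bound holding uniformly in $x$.

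It remains to combine this with the hypothesis $\|V_n - V\| = O_p(r_n)$, and this is where the main obstacle lies. Feeding an $O_p(r_n)$ input through $\omega$ yields $\omega(\|V_n - V\|) = o_p(1)$ whenever $r_n \to 0$ (the supremum converges to zero in probability), but it does \emph{not} by itself recover the rate $O_p(r_n)$: a generic modulus $\omega$ can flatten out arbitrarily slowly near the origin, so $\omega(M r_n)/r_n$ need not stay bounded. To obtain the stated rate one must control $\omega$ linearly near $0$, i.e. assume $L$ is Lipschitz in its second argument uniformly over $x$ (equivalently $\omega(\eta) \le L_0\,\eta$), exactly as Theorem \ref{theorem:pcomp} assumes Lipschitz continuity of $W$. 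Under that strengthening the conclusion is direct: $\sup_{x \in \mathcal{X}} |L(x, V_n) - L(x, V)| \le L_0 \|V_n - V\| = O_p(r_n)$. I would therefore either add the uniform-Lipschitz hypothesis, which is automatically satisfied in the paper's application where the regularized estimator (\ref{eqn:rnw}) is differentiable and hence Lipschitz on the compact domain, or state the weaker conclusion $o_p(1)$ in the case where only continuity is available.
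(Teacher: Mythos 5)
Your proof follows the same skeleton as the paper's: both invoke the Heine--Cantor theorem to obtain uniform continuity of $L$ on the compact product $\mathcal{X}\times\mathcal{V}$, and both convert the resulting modulus into the bound $\mathbb{P}\big(\sup_{x}|L(x,V_n)-L(x,V)|>\epsilon\big) \leq \mathbb{P}\big(\|V_n-V\|>\delta\big)$. The difference is that the paper stops there and declares the result immediate, whereas you correctly observe that it is not: with $\epsilon$ and $\delta$ fixed constants tied together by uniform continuity, that inequality yields only $\sup_x |L(x,V_n)-L(x,V)| = o_p(1)$ when $r_n \to 0$, not the claimed rate $O_p(r_n)$. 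Your diagnosis of the gap is genuine and applies to the paper's own proof. Indeed the lemma as stated is false: take $\mathcal{X}$ a singleton, $\mathcal{V}=[0,1]$, $L(x,v)=\sqrt{v}$, $V=0$, and $V_n = r_n$ deterministically; the hypothesis holds, yet $|L(x,V_n)-L(x,V)| = \sqrt{r_n}$ and $\sqrt{r_n}/r_n \to \infty$, so the conclusion fails. Your repair --- assuming $L$ is Lipschitz in $v$ uniformly over $x$, mirroring the Lipschitz hypothesis already imposed on $W$ in Theorem \ref{theorem:pcomp} --- is the right one, and it is also what the downstream use requires, since Corollary \ref{corollary:var} invokes this lemma precisely to transfer the rate $O_p(r_k)$ to the L2NW estimator.

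One caution on the justification of your repair in the application: differentiability of the kernel (property (d) in the paper's definition) does not by itself give the Lipschitz property on a compact set, since a function can be everywhere differentiable with unbounded derivative (e.g. $v \mapsto v^2\sin(1/v^2)$ on $[-1,1]$). You should either require a continuously differentiable kernel, so that the functions $\eta$, $\delta$, $\rho$ in Corollary \ref{corollary:var} are $C^1$ and hence Lipschitz on the compact domain, or state the uniform Lipschitz condition on $L$ as an explicit hypothesis of the lemma. With that adjustment your argument is complete and establishes the stated rate, which the paper's proof as written does not.
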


\begin{proof}
The Heine-Cantor theorem (Theorem 4.19 in \cite{rudin1964}) gives uniform continuity of $L(x,v)$ on $\mathcal{X} \times \mathcal{V}$, and this implies that for all $x$, $\|V_n - V\| > \delta > 0$ whenever $|L(x, V_n) - L(x,V)| > \epsilon > 0$.  Proceeding analogously to \cite{vaart2000}, we have
\begin{equation}
\mathbb{P}(\sup_x |L(x, V_n) - L(x,V)| > \epsilon) = \mathbb{P}(\exists x : |L(x, V_n) - L(x,V)| > \epsilon)\leq \mathbb{P}(\|V_n - V\| > \delta).
\end{equation}
The result is immediate.
\end{proof}

We can now show the first convergence result for the L2NW estimator.  Let $\hat{X}_i,\hat{Y}_i$ be defined in the same way as $X_i,Y_i$, with the change that $\hat{X}_i,\hat{Y}_i$ are defined using state estimates $\hat{x}$ instead of noiseless measurements of the state $x$.  The intuition of why this result is true is that though noise in $\hat{Y}_i$ and $\hat{X}_i$ is correlated, our filtering defined in Section \ref{section:filter} makes this correlation asymptotically insignificant.  This result can be interpreted in an instrumental variables context \cite{astrom1971,ljung1987}.

\begin{corollary}
\label{corollary:var}
If $\|\hat{X}_i - X_i\| = O_p(r_k)$, where $k \in \mathbb{N}_+$, then $\sup_{x \in \mathcal{X},u \in \mathcal{U}}\|\mathcal{O}_n(x,u;\hat{X}_i,\hat{Y}_i) - \mathcal{O}_n(x,u;X_i,Y_i)\| = O_p(r_k)$.
\end{corollary}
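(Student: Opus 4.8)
The plan is to recognize this corollary as a direct application of the uniform continuous-mapping lemma (Lemma \ref{lemma:cmt}), by exhibiting the L2NW estimator as a continuous function of the data, jointly with the free variable $\xi = [x'\ u']'$, on an appropriate compact domain. Concretely, I would stack the finitely many data points into a single vector $V = (X_1,\ldots,X_n,Y_1,\ldots,Y_n)$ and regard $\mathcal{O}_n(\xi;X_i,Y_i)$ as a map $L(\xi,V)$, so that the hatted estimator is $L(\xi,\hat{V})$ with $\hat{V} = (\hat{X}_1,\ldots,\hat{X}_n,\hat{Y}_1,\ldots,\hat{Y}_n)$. The corollary is then the statement that $\sup_\xi \|L(\xi,\hat{V}) - L(\xi,V)\| = O_p(r_k)$, which is exactly the conclusion of Lemma \ref{lemma:cmt}.

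First I would establish the two ingredients that lemma requires: continuity of $L$ and an $O_p(r_k)$ bound on $\|\hat{V} - V\|$. For continuity, the numerator $\sum_i Y_i \kappa(\|\xi - X_i\|^2/h^2)$ is continuous in $(\xi,V)$ since $\kappa$ is differentiable (hence continuous) and norms, compositions, products, and finite sums of continuous functions are continuous. The crucial point is the denominator $\lambda + \sum_i \kappa(\|\xi - X_i\|^2/h^2)$: because $\lambda \in \mathbb{R}_+$ is strictly positive it is bounded below by $\lambda > 0$ uniformly, so the ratio is continuous on the whole domain with no small-denominator pathology. This is precisely the role of the regularization, and it is what fails for the plain Nadaraya-Watson estimator ($\lambda = 0$).

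Next I would verify $\|\hat{V} - V\| = O_p(r_k)$. Since the control inputs are known exactly, $\hat{X}_i - X_i = [(\hat{x}_i - x_i)'\ 0']'$, so $\|\hat{X}_i - X_i\| = O_p(r_k)$ by hypothesis. For the residuals, $\hat{Y}_i - Y_i = (\hat{x}_{i+1} - x_{i+1}) - A(\hat{x}_i - x_i)$, whence $\|\hat{Y}_i - Y_i\| \leq \|\hat{x}_{i+1} - x_{i+1}\| + \|A\|\,\|\hat{x}_i - x_i\| = O_p(r_k)$ as well; note this rate on the residuals is not assumed but must be derived, since the corollary hypothesizes only the rate on $\hat{X}_i$. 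As there are only finitely many ($n$) data points for fixed $n$, the stacked error is the maximum of finitely many $O_p(r_k)$ terms and is therefore itself $O_p(r_k)$. For the compactness demanded by Lemma \ref{lemma:cmt}, I would invoke Lemma \ref{lemma:xhat}: the bounded noise set forces each $\hat{x}_i \in x_i \oplus \mathcal{E}$, so both $V$ and $\hat{V}$ lie in a fixed compact set, while $\mathcal{X}\times\mathcal{U}$ is compact by assumption.

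With these in place, applying Lemma \ref{lemma:cmt} to $L(\xi,V)$ componentwise over the $p$ coordinates of the vector-valued output immediately yields $\sup_{x\in\mathcal{X},u\in\mathcal{U}}\|\mathcal{O}_n(x,u;\hat{X}_i,\hat{Y}_i) - \mathcal{O}_n(x,u;X_i,Y_i)\| = O_p(r_k)$. I expect the main obstacle to be the continuity argument for the ratio, specifically being explicit that the regularizer $\lambda$ keeps the denominator bounded away from zero and thus makes $L$ continuous on the entire compact domain rather than merely away from the design points; the remaining bookkeeping (propagating the rate from $\hat{X}_i$ to $\hat{Y}_i$, and the finiteness and compactness of the stacked data) is routine.
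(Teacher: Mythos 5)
Your proposal is correct and follows essentially the same route as the paper's proof: stack the finitely many data points into a single vector, view the L2NW estimator as a continuous function of the free variable and the data, and invoke Lemma \ref{lemma:cmt}, after deriving the $O_p(r_k)$ rate for $\hat{Y}_i - Y_i$ from the hypothesis on $\hat{X}_i$. The only cosmetic difference is that the paper applies Lemma \ref{lemma:cmt} three times (to the numerator $\eta$, the denominator $\delta$, and then the ratio $\rho$), whereas you apply it once to the whole ratio after noting that the regularizer $\lambda > 0$ keeps the denominator bounded away from zero; your version also makes explicit the compactness and componentwise details that the paper leaves implicit.
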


\begin{proof}
Define a random variable $V_k = \begin{bmatrix} \hat{X}_0'& \ldots& \hat{X}_{n-1}'&\hat{Y}_0'& \ldots&\hat{Y}_{n-1}'\end{bmatrix}'$, and let $V$ be the corresponding limiting vector.  The definition of $Y_i$ and the corollary's assumption imply that $\|\hat{Y}_i - Y_i\| = O_p(r_k)$, and so $\|V_k - V\| = O_p(n\cdot r_k)$.  

Now consider the functions $\eta = \sum_i \hat{Y}_i \kappa(\Xi_i)/n$, $\delta = \sum_i \kappa(\Xi_i)/n$, and $\rho = \eta/(\lambda/n + \delta)$.  Applying Lemma \ref{lemma:cmt} gives, $\sup_{x \in \mathcal{X},u \in \mathcal{U}}\|\eta(\xi;V_k) - \eta(\xi;V)\| = O_p(r_k)$ and $\sup_{x \in \mathcal{X},u \in \mathcal{U}}\|\delta(\xi;V_k) - \delta(\xi;V)\| = O_p(r_k)$.  Another application of Lemma \ref{lemma:cmt} gives $\sup_{x \in \mathcal{X},u \in \mathcal{U}}\|\rho(\xi;V_k) - \rho(\xi;V)\| = O_p(r_k)$.  The result follows by noting that $\mathcal{O}_n(x,u;\hat{X}_i,\hat{Y}_i) = \rho(\xi;V_k)$ and $\mathcal{O}_n(x,u;X_i,Y_i) = \rho(\xi;V)$.
\end{proof}

\begin{remark}
The variance of the NW estimator in its typical setup is known to uniformly converge at a rate no faster than $n^{-4/(p+4)}$ \cite{ruppertwand1994}.  Our result gives a nonstandard rate of convergence $r_k$, because we have a time-series setup with presmoothing to account for the errors in measurements.
\end{remark}

Convergence of an estimator is often studied by decomposing the estimation error into a bias and variance term.  For proving convergence of our L2NW estimator, we have to be careful in defining the probabilistic framework before we can decompose the error into two terms.  The $X_i$ values are not independent variables drawn from some probability distribution. They are exactly the states of a deterministic system, as it evolves in time.  In fact, if the control inputs $u_n$ are (deterministically or statistically) known for each point in time, then $X_i$ and $X_j$ are dependent for all values where $i \geq j$.  

For a nonlinear system, SE is usually defined using ergodicity or mixing, but this is hard to verify in general.  Instead, we define SE as a finite sample cover (FSC) of $\mathcal{X}$.  Let $\mathcal{B}_h(x) = \{y : \|x-y\| \leq h\}$ be a ball centered at $x$ with radius $h$, then a FSC of $\mathcal{X}$ is a set $\mathcal{S}_h = \bigcup_i \mathcal{B}_{h/2}(X_i)$ that satisfies $\mathcal{X} \subseteq \mathcal{S}_h$.  The intuition is that $\{X_i\}$ sample $\mathcal{X}$ with average, inter-sample distance less than $h/2$.  Assuming SE in the form of a FSC with asymptotically decreasing radius $h$, we can show that the control law of LBMPC that uses L2NW converges to that of an MPC that knows the true dynamics.  

Recall that $g(x,u)$ is the modeling error of the approximate linear system defined in \cite{aswani2011v1}.  We have the following result, which shows that the L2NW estimator with noiseless measurements and SE can approximate the unmodeled dynamics arbitrarily well. 

\begin{lemma}
\label{lemma:fsc}
If $g(x,u)$ is Lipschitz with constant $L$ and $\mathcal{S}_h$ is a finite sample cover of $\mathcal{Z} \subseteq \mathcal{X} \times \mathcal{U}$, then $\sup_{(x,u) \in \mathcal{Z}}\|g(x,u) - \mathcal{O}_n(x,u;X_i,Y_i)\| \leq \mu M_g + (1-\mu)L h$, where $\mu = \lambda/(\lambda+\kappa(1/2))$ and $M_g = \max \|x\| : x \in \mathcal{X}$.
\end{lemma}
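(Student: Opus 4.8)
The plan is to exploit the noiseless assumption so that $Y_i = x_{i+1}-(Ax_i+Bu_i) = g(X_i)$ exactly, and then to rewrite the estimation error $g(\xi) - \mathcal{O}_n(x,u;X_i,Y_i)$, with $\xi = [x'\ u']'$, over the common denominator $\lambda + S$ where $S = \sum_i \kappa(\Xi_i)$. Since $g(\xi)(\lambda+S) = \lambda g(\xi) + \sum_i g(\xi)\kappa(\Xi_i)$, subtracting the numerator $\sum_i g(X_i)\kappa(\Xi_i)$ of $\mathcal{O}_n$ telescopes into
\begin{equation}
g(\xi) - \mathcal{O}_n = \frac{\lambda\, g(\xi) + \sum_i\big(g(\xi) - g(X_i)\big)\kappa(\Xi_i)}{\lambda + S}.
\end{equation}
This is the crucial algebraic step: it isolates a regularization term $\lambda g(\xi)$ (the price paid for taking $\lambda>0$) from a smoothing term that will be controlled by the Lipschitz modulus of $g$.

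Next I would apply the triangle inequality together with the nonnegativity of $\kappa$. The first term contributes $\lambda\|g(\xi)\| \le \lambda M_g$, using that $\|g\|$ is bounded by $M_g$ on the compact domain. For the second term, the finite-support property $\kappa(\nu)=0$ for $|\nu|\ge 1$ forces every summand with $\kappa(\Xi_i)\neq 0$ to satisfy $\|\xi - X_i\| < h$, so Lipschitz continuity gives $\|g(\xi)-g(X_i)\| \le L\|\xi-X_i\| \le Lh$; summing, this term is at most $Lh\cdot S$. The result is
\begin{equation}
\|g(\xi) - \mathcal{O}_n\| \le \frac{\lambda M_g + Lh\,S}{\lambda + S} = \frac{\lambda}{\lambda+S}M_g + \frac{S}{\lambda+S}Lh,
\end{equation}
a convex combination of $M_g$ and $Lh$ placing weight $\lambda/(\lambda+S)$ on $M_g$.

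The last step uses sufficient excitation in the form of the finite sample cover to pin down $S$. Because $\mathcal{S}_h$ covers $\mathcal{Z}$, for each $(x,u)\in\mathcal{Z}$ there is a sample $X_i$ with $\|\xi - X_i\| \le h/2$; by positivity of $\kappa$ and its nonincreasing behavior on $[0,\infty)$, the corresponding summand is at least $\kappa(1/2)$, so $S \ge \kappa(1/2)$. Viewing the right-hand side as $f(S) = (\lambda M_g + Lh\,S)/(\lambda + S)$, a one-line derivative computation shows that $f'(S)$ has the sign of $Lh - M_g$; in the relevant regime $M_g \ge Lh$ the map $f$ is nonincreasing, so $f(S) \le f(\kappa(1/2)) = \mu M_g + (1-\mu)Lh$ with $\mu = \lambda/(\lambda+\kappa(1/2))$, which is exactly the claim.

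I would flag two points as the real substance. The main obstacle is this final monotonicity argument: the stated bound holds only because $f$ is decreasing in $S$, which requires $M_g \ge Lh$ (the small-bandwidth regime), so I would make this hypothesis explicit, since for $M_g < Lh$ the convex combination moves the wrong way and the bound can fail. The second subtlety is reconciling the kernel argument with the cover: the FSC radius $h/2$ paired with the appearance of $\kappa(1/2)$ in the conclusion indicates that the normalized distance entering $\kappa$ should be read as $\|\xi - X_i\|/h$, and I would be careful to state whether $\Xi_i$ is treated as a distance or a squared distance when invoking monotonicity, so that the guaranteed sample lands at kernel argument $1/2$ rather than $1/4$.
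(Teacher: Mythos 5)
Your proof is correct and is essentially the paper's own argument: your common-denominator decomposition is exactly the paper's representation of the L2NW estimator as the positively weighted average $w_0 \cdot 0 + \sum_{i \in I} w_i Y_i$ with $w_0 = \lambda/(\lambda + S)$, followed by the same triangle inequality, the same FSC lower bound $S \geq \kappa(1/2)$, and the same monotonicity step replacing $w_0$ by $\mu$. Your two flags are legitimate points that the paper leaves implicit: the final step $w_0 M_g + (1-w_0)Lh \leq \mu M_g + (1-\mu)Lh$ does require $M_g \geq Lh$ (harmless in the intended regime $h \rightarrow 0$), and since the paper defines $\Xi_i$ as the \emph{squared} normalized distance, the cover actually guarantees $\kappa(\Xi_i) \geq \kappa(1/4) \geq \kappa(1/2)$ by monotonicity of $\kappa$, so the stated constant remains valid though conservative.
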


\begin{proof}
Define $I = \{i : \Xi_i \leq 1\}$, and note that $\kappa(\Xi_j) = 0$ for $j \notin I$.  An alternative characteristic of the L2NW estimator is as the positively weighted average: $\mathcal{O}_n(x,u;X_i,Y_i) = w_0 \cdot 0 + \sum_{i \in I} w_i \cdot Y_i$,  where $w_0,w_i > 0$, $w_i = \kappa(\Xi_i)/(\lambda + \sum_j \kappa(\Xi_j))$, and $w_0 + \sum_i w_i = 1$.  The finite sample cover property of $\mathcal{S}_h$ implies that $\sum_j \kappa(\Xi_j) \geq \kappa(1/2)$. Noting $w_0 < \lambda/(\lambda + \kappa(1/2))$ and $Y_i = g(X_i)$, the result follows from the triangle inequality.
\end{proof}

\begin{remark}
The result shows that the L2NW estimator in our setup has bias $O(\lambda + h)$, where $\lambda = O(h)$.  This matches the bias of the NW estimator $O(h)$ in a standard setup at both interior and boundary points \cite{muller1987,ruppertwand1994}. 
\end{remark}

\begin{theorem}
\label{theorem:uniform}
Assuming $\mathcal{S}_{h_n}$ is a finite sample cover of $\mathcal{Z} \subseteq \mathcal{X} \times \mathcal{U}$, for some sequence $h_n \rightarrow 0$; $\lambda = O(h_n)$; and $k$ is a sequence such that $T_u/k \rightarrow 0$ (see Theorem \ref{theorem:filter}); then the L2NW estimator is uniformly consistent on $\mathcal{Z}$ and converges at rate
\begin{equation}
\label{eqn:cons}
\sup_{(x,u)\in\mathcal{Z}} \|g(x,u) - \mathcal{O}_n(x,u;\hat{X}_i,\hat{Y}_i)\| = O(\lambda + h_n) + O_p(k^{-(r+1)/(2r+3)}).
\end{equation}
\end{theorem}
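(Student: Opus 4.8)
The plan is to carry out a bias--variance decomposition of the total error and to bound each piece using the results already established, so that no new analysis is required. First I would apply the triangle inequality to split
\begin{equation*}
\sup_{(x,u)\in\mathcal{Z}}\|g(x,u) - \mathcal{O}_n(x,u;\hat{X}_i,\hat{Y}_i)\|
\end{equation*}
into a \emph{bias} term $\sup_{(x,u)\in\mathcal{Z}}\|g(x,u) - \mathcal{O}_n(x,u;X_i,Y_i)\|$, comparing $g$ against the estimator built from the noiseless data $X_i,Y_i$, plus a \emph{variance} term $\sup_{(x,u)\in\mathcal{Z}}\|\mathcal{O}_n(x,u;X_i,Y_i) - \mathcal{O}_n(x,u;\hat{X}_i,\hat{Y}_i)\|$ that captures the cost of replacing the true states with the filtered estimates $\hat{x}$.

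For the bias term I would invoke Lemma \ref{lemma:fsc} with the finite sample cover $\mathcal{S}_{h_n}$, whose radius is $h_n$, to obtain the deterministic bound $\mu M_g + (1-\mu)L h_n$ with $\mu = \lambda/(\lambda+\kappa(1/2))$. Because $\kappa(1/2)$ is a fixed positive constant and $\lambda = O(h_n)$ by hypothesis, we have $\mu = O(\lambda) = O(h_n)$ and $1-\mu = O(1)$, so this bound collapses to $O(\lambda + h_n)$. This step uses the (standing) assumption that $g$ is Lipschitz, which is what Lemma \ref{lemma:fsc} requires.

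For the variance term the key is to feed the filter's convergence rate into Corollary \ref{corollary:var}. Theorem \ref{theorem:filter} gives $\|x_m - \hat{x}_m\langle n\rangle\| = O_p(k^{-(r+1)/(2r+3)})$ whenever $T_u$ is fixed and $k \rightarrow \infty$, and the hypothesis $T_u/k \rightarrow 0$ (equivalently $T_s \rightarrow 0$) places us in exactly that regime. Since the inputs $u_i$ are known exactly and only the state coordinates of $X_i = [x_i'\ u_i']'$ are estimated, it follows that $\|\hat{X}_i - X_i\| = O_p(k^{-(r+1)/(2r+3)})$, which is precisely the hypothesis of Corollary \ref{corollary:var} with $r_k = k^{-(r+1)/(2r+3)}$. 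The corollary then bounds the variance term by $O_p(k^{-(r+1)/(2r+3)})$, and adding this to the deterministic bias gives (\ref{eqn:cons}).

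I expect the main obstacle to be bookkeeping rather than any deep argument, since all of the analytic content is contained in Lemma \ref{lemma:fsc}, Corollary \ref{corollary:var}, and Theorem \ref{theorem:filter}. The care required is in correctly propagating the stochastic filter rate through Corollary \ref{corollary:var} (in particular recognizing that the input coordinate contributes no error, so that the rate on $\hat{X}_i$ matches the rate on $\hat{x}$, and recalling that the corollary internally upgrades this to the same rate for $\hat{Y}_i$), and in keeping the deterministic $O(\lambda + h_n)$ bias and the stochastic $O_p(k^{-(r+1)/(2r+3)})$ variance as separate additive terms rather than conflating their orders. Verifying that $\mu M_g + (1-\mu)L h_n = O(\lambda + h_n)$ under $\lambda = O(h_n)$ is the single place where the hypotheses enter explicitly.
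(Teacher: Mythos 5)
Your proposal is correct and follows essentially the same route as the paper: a triangle-inequality split into a bias term handled by Lemma \ref{lemma:fsc} and a variance term handled by Corollary \ref{corollary:var} together with the filter rate from Theorem \ref{theorem:filter}. In fact your write-up supplies details the paper leaves implicit (the check that $\mu M_g + (1-\mu)Lh_n = O(\lambda + h_n)$ under $\lambda = O(h_n)$, and that the noiseless inputs make the rate on $\hat{X}_i$ match the filter rate on $\hat{x}$), so no gap remains.
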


%\renewcommand{\labelenumi}{\alph{enumi})}
%\begin{enumerate}
%\end{enumerate}

\begin{proof}
Using the triangle inequality, the left-hand side of (\ref{eqn:cons}) is bounded by 
\begin{equation}
\sup_{(x,u)\in\mathcal{Z}} \|\mathcal{O}_n(x,u;\hat{X}_i,\hat{Y}_i) - \mathcal{O}_n(x,u;X_i,Y_i)\| + \sup_{(x,u)\in\mathcal{Z}} \|g(x,u) - \mathcal{O}_n(x,u;X_i,Y_i)\|
\end{equation}
This first term is controlled by Corollary \ref{corollary:var} and Theorem \ref{theorem:filter}, and the second is governed by Lemma \ref{lemma:fsc}.
\end{proof}

\begin{remark}
Ideally, we would like $\mathcal{Z} = \mathcal{X} \times \mathcal{U}$, but this does not always happen.  It requires that the trajectory of the system sufficiently explores the space in a manner formalized by the definition of finite sample cover.  A set $\mathcal{Z}$ which meets the assumptions of Theorem \ref{theorem:uniform} always exists, and this can be shown by construction: Given any $n > 0$, let $\mathcal{Z} = \cup_{i = 0}^{n-1} X_i$.  A better set $\mathcal{Z}$ is defined as the limit of a convergent subsequence of $X_i$, and its advantage is that the $X_i$ visit a neighborhood of the limit infinitely often.  Such a limit is guaranteed to exist by the Bolzano-Weierstrass theorem.  These two constructions mean that there is always some set on which the nonlinear dynamics $g(x,u)$ can be learned, and this set corresponds to points which the trajectory visits. 
\end{remark}

With the previous theorem, we can now show the control law of LBMPC with L2NW as oracle converges to that of MPC that knows the unmodeled dynamics, when there is SE as defined by the appropriate FSC.

\begin{proof}[Proof of Theorem 7 in \cite{aswani2011v1}]
The result directly follows from Theorem 6 in \cite{aswani2011v1}, since Theorem \ref{theorem:uniform} shows that the L2NW estimator satisfies the appropriate convergence conditions.
\end{proof}

\begin{remark}
Note that in the case of no measurement noise, the asymptotic term $O_p(k^{-(r+1)/(2r+3)})$ drops out of the corresponding expressions above.
\end{remark}

\section{Acknowledgments}

This material is based upon work supported by the National Science Foundation under Grant No. 0931843, the Army Research Laboratory under Cooperative Agreement Number W911NF-08-2-0004, the Air Force Office of Scientific Research under Agreement Number FA9550-06-1-0312, and PRET Grant 18796-S2.

\bibliographystyle{plain}        % Include this if you use bibtex 
\bibliography{safe_mpc}           % and a bib file to produce the 

\end{document}